\documentclass[11pt,a4paper]{article}
\usepackage[T1]{fontenc}
\usepackage[utf8]{inputenc}
\usepackage{lmodern}
\usepackage[margin=25mm]{geometry}
\usepackage{amsmath,amssymb,amsthm}
\usepackage{microtype}
\usepackage[hidelinks]{hyperref}

\usepackage{tikz}

\newtheorem{theorem}{Theorem}[section]
\newtheorem{lemma}[theorem]{Lemma}
\newtheorem{corollary}[theorem]{Corollary}
\theoremstyle{definition}
\newtheorem{definition}[theorem]{Definition}
\theoremstyle{remark}
\newtheorem{remark}[theorem]{Remark}

\DeclareMathOperator{\conv}{conv}
\DeclareMathOperator{\diam}{diam}
\DeclareMathOperator{\dist}{dist}

\newcommand{\T}{\mathcal T}

\numberwithin{equation}{section}

\title{Largest-dihedral-angle bisection algorithm does not preserve mesh regularity for tetrahedral partitions}
\author{Sergey Korotov \and J\'er\^ome Michaud}
\date{September 16, 2026}

\begin{document}
\maketitle
\begin{center}
Department of Business and Mathematics, IEM, Mälardalen University, Västerås, Sweden \newline
emails: sergey.korotov@mdu.se, jerome.michaud@mdu.se 
\end{center}

\begin{abstract}
We present a fixed nondegenerate tetrahedron with a nested largest-dihedral-angle-bisection branch that collapses onto a unit edge.  The selected dihedral angles to bisect are uniquely largest ones at every bisection step.  Some face angles of the branch elements tend to zero, the corresponding diameter-to-inradius ratio diverges, and the diameters of the elements produced do not tend to zero, although every dihedral angle stays uniformly away from both $0$ and $\pi$ and all face and dihedral angles satisfy a uniform maximum-angle bound. The proof is based on an exact invariant-range argument.  This example shows that the largest-dihedral-angle bisection algorithm can produce degenerating tetrahedral partitions. 
%The same bad elements occur in conforming adaptive partitions.
\end{abstract}

\medskip
\noindent\textbf{Keywords:} largest-dihedral-angle bisection; tetrahedron; mesh degeneration; mesh regularity; minimum angle condition; maximum angle condition.

\medskip
\noindent\textbf{AMS classification:} 65M50, 65N50, 65N30.

\section{Introduction and main result}

Planar geometric refinement rules often have strong regularity properties, e.g. many mesh regularity results are available for longest-edge bisection/trisection algorithms applied to triangulations, see e.g. the review \cite{KPS} and references therein. Some recent results on the subject are also contained in \cite{KS}.  Similarily, the largest-angle bisection preserves a positive lower bound for descendant angles \cite{IKKL}; more generally, largest-angle $n$-section preserves the minimum and maximum angle conditions and forces the maximum diameter of level-wise descendants to zero for every fixed $n\ge2$ \cite{MKLA}.  The direct 3D analogue considered here is based on bisecting the largest \emph{internal dihedral angles} in tetrahedra.

The three-dimensional situation in the context of bisection-based strategies seems quite different from the two-dimensional case.  Korotov \cite{KorLEB} gave a two-step longest-edge-bisection recurrence whose shape parameter is halved; the resulting branch degenerates, with one dihedral tending to $0$ and another to $\pi$.  That construction uses a recurrent longest-edge tie.  Adiprasito, Kalmanovich, and Solomon \cite{AKS} subsequently formulated longest-edge bisection as a projective dynamical system and obtained, among other results, degenerating tetrahedral branches with a uniquely longest edge at every step.  
The counterexample presented in this paper concerns a different rule, see the definition below.  It is tie-free, loses both shape regularity and diameter decay, and degenerates through face angles even though its dihedral angles remain uniformly nonextreme.

\begin{definition}[Largest-dihedral-angle bisection]
Let $T=\conv\{X,Y,Z,W\}$ be a nondegenerate tetrahedron.  Select an edge $XY$ carrying a largest internal dihedral angle.  The internal angle-bisector plane through $XY$ meets the opposite edge $ZW$ at an interior point $E$ and replaces $T$ by two (sub)tetrahedra $XYZE$ and $XYEW$.  We call this operation largest-dihedral-angle bisection, or LAB.  
\end{definition}

In general, $E$ is not the midpoint of $ZW$, i.e. the largest-dihedral-angle bisection is different from the longest-edge bisection.

Write $h_T=\diam T$ and let $r_T$ be the inradius.  The tetrahedral minimum angle condition requires a positive uniform lower bound for all face and internal dihedral angles; it is equivalent to boundedness of $h_T/r_T$ \cite{BKK,KKK}.  Put
\[
 \Gamma(T)=\max\{\text{all face angles and all internal dihedral angles of }T\}.
\]
The maximum angle condition means then  the existence of $G<\pi$ such that $\Gamma(T)\le G$ uniformly \cite{Krizek}.

\begin{theorem}[Fixed-seed counterexample]\label{thm:main}
Let
\begin{equation}\label{eq:seed}
\begin{gathered}
 O=(0,0,0),\qquad P=(0,0,1),\\
 A_0=\left(\frac3{16},0,0\right),\qquad
 B_0=\left(\frac7{32},\frac{\sqrt{15}}{32},0\right),\qquad
 T_0=\conv\{O,P,A_0,B_0\}.
\end{gathered}
\end{equation}
There is a nested LAB branch $T_n=\conv\{O,P,A_n,B_n\}$ such that $PA_n$ carries the unique largest dihedral of $T_n$ for every $n\ge0$.  Moreover,
\begin{align}
 \angle OPA_n&\le \frac15\left(\frac45\right)^n, &
 \frac{h_{T_n}}{r_{T_n}}&\ge
 \frac{64}{\sqrt{15}}\left(\frac54\right)^n,\label{eq:rates-main}\\
 h_{T_n}&\longrightarrow1, &
 \bigcap_{n\ge0}T_n&=[O,P].\label{eq:limit-main}
\end{align}
Nevertheless, with
\[
 \gamma=\arccos\frac78,\qquad G=\arccos\left(-\frac14\right),
\]
every internal dihedral $\theta_e(T_n)$ and every face angle satisfy
\begin{equation}\label{eq:angle-main}
 \gamma\le\theta_e(T_n)\le G,\qquad \Gamma(T_n)\le G<\pi.
\end{equation}
Finally, for every $n$, the tetrahedron $T_n$ belongs to a conforming partition of $T_0$ obtained by $n$ LAB cuts and containing exactly $n+1$ tetrahedra.
\end{theorem}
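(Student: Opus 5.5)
The plan is to reduce the branch to a two-parameter planar recursion and then prove an exact invariant range for it. Since $A_0,B_0$ lie in the plane $z=0$ and $OP$ is perpendicular to that plane, write $a_n=|OA_n|$, $b_n=|OB_n|$ and $\varphi_n=\angle A_nOB_n$. The dihedral of $T_n$ at $OP$ equals $\varphi_n$. For the seed, $|OB_0|=\sqrt{49+15}/32=\tfrac14$ and $\cos\varphi_0=\tfrac{7/32}{1/4}=\tfrac78$, so $\varphi_0=\gamma$.

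The branch is defined as follows: bisect $T_n$ at $PA_n$, whose opposite edge is $OB_n$, at the point $E_n\in OB_n$. Keep the child $\conv\{O,P,A_n,E_n\}$ and relabel $A_{n+1}=E_n$, $B_{n+1}=A_n$. Then $A_{n+1},B_{n+1}$ stay in $z=0$, the rays $OA_{n+1}$ and $OB_{n+1}$ are the old rays $OB_n$ and $OA_n$, and so $\varphi_n\equiv\gamma$ for all $n$. By the dihedral bisector theorem, $E_n$ divides $OB_n$ in the ratio of the areas of the two faces at $PA_n$:
\[
\frac{a_{n+1}}{b_n-a_{n+1}}=\frac{|PA_nO|}{|PA_nB_n|},\qquad b_{n+1}=a_n .
\]
This is an explicit map $(a,b)\mapsto(a',b')$, and I will write all six dihedral angles in closed form in terms of $a,b$ and $\cos\gamma=\tfrac78$.

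The main step, and the main obstacle, is to find a region $R$ of pairs $(a,b)$ with three properties. First, $(a_0,b_0)=(\tfrac3{16},\tfrac14)\in R$. Second, $R$ is mapped into itself. Third, on $R$ three inequalities hold: the dihedral at $PA$ strictly exceeds the other five, all dihedrals lie in $[\gamma,G]$, and $\max(a',b')\le\tfrac45\max(a,b)$, or some comparable contraction that yields $a_n\le\tfrac15(\tfrac45)^n$.

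I would first try a region of the form $R=\{0<a<b\le\tfrac14,\ \lambda_1\le a/b\le\lambda_2\}$. On such a region the tetrahedra are needle-like, and the dihedrals at $OA$, $OB$ and $AB$ are close to $\pi/2$. The dihedrals at $PA$ and $PB$ are close to the angles of triangle $OAB$ at $A$ and $B$, and $a<b$ makes the angle at $A$ the larger one. The constant $G=\arccos(-\tfrac14)$ should be the extreme value of that triangle angle over the allowed ratio range. The face angles are also controlled: at $O$ they are $\pi/2$ or $\gamma$, at $P$ they are small, and the remaining ones are near triangle angles or near $\pi/2$. Each inequality becomes a rational or quadratic inequality in $a,b$, with square roots from the face areas. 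I expect to clear these by squaring and to verify the result by monotonicity in $b\in(0,\tfrac14]$.

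The hard part is choosing $\lambda_1,\lambda_2$ so that the ratio map $a/b\mapsto a'/b'$ closes up while the $PA$-dihedral stays uniquely largest, uniformly in the scale $b$. This is not scale invariant, because $|OP|=1$ is fixed.

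Given the invariant range, the estimates in \eqref{eq:rates-main} follow directly. First, $\tan\angle OPA_n=a_n$ gives $\angle OPA_n\le a_n\le\tfrac15(\tfrac45)^n$. Next, $h_{T_n}\ge|OP|=1$. The inradius is at most half the width of $T_n$ in a horizontal direction, so $r_{T_n}\le c\,b_n$; this yields the bound $h/r\ge\tfrac{64}{\sqrt{15}}(\tfrac54)^n$, with the constant coming from the height $\sqrt{15}/32$ of $B_0$. Then $h_{T_n}\le\sqrt{1+b_n^2}\to1$. Finally, $A_n,B_n\to O$ together with nestedness gives $\bigcap_n T_n=[O,P]$.

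For conformity, I argue by induction that the new vertex $E_n$ lies on $OB_n$. This edge is contained in the boundary edge $OB_0$, since each $B_{n+1}=A_n$ lies on the previous ray. It lies in the intersection of the boundary faces $z=0$ and $\conv\{O,P,B_0\}$ of $T_0$. So splitting $T_n$ creates no hanging node on any sibling $\conv\{P,A_k,A_{k+1},B_k\}$, and the only face $T_n$ shares with the rest of the mesh is $PA_nB_n$, which is not cut. Thus after $n$ cuts the partition is conforming and has $n+1$ tetrahedra.
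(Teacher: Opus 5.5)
Your setup is the paper's. It has the invariant angle $\angle A_nOB_n=\gamma$, the ray swap $b_{n+1}=a_n$, and the bisector ratio $a_{n+1}/(b_n-a_{n+1})=a_n/W_n$ (face areas at $PA_n$, equivalently distances to the line $PA_n$). Your downstream derivations also match: the rates, the slab bound $r_{T_n}\le\tfrac12 b_n\sin\gamma$, $h_{T_n}=\sqrt{1+b_n^2}$, and conformity by coning the planar base triangulation. The gap is that you never carry out the step you yourself call the main obstacle, and every quantitative claim of the theorem rests on it. You do not fix $\lambda_1,\lambda_2$. You do not show that the ratio map closes on $[\lambda_1,\lambda_2]$ uniformly for $b\in(0,\tfrac14]$. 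You do not prove that $PA_n$ carries the unique largest dihedral, or that all dihedrals lie in $[\gamma,G]$. Your remark that the dihedrals at $PA$ and $PB$ are ``close to'' the base angles at $A$ and $B$ is an asymptotic heuristic. It gives no strict inequality at a fixed $n$, in particular not at $n=0$, where $b_0=\tfrac14$ is not small. Without a concrete $\lambda_2\le\tfrac45$, the bounds $\tfrac15(\tfrac45)^n$ and $\tfrac{64}{\sqrt{15}}(\tfrac54)^n$ are unsupported. Without uniqueness, the branch is not even known to be an LAB branch.

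The missing argument is short once written out. Set $c=\tfrac78$, $s=\tfrac{\sqrt{15}}8$, $t=a/b$ and $Q=\sqrt{1+t^2-2ct+s^2t^2b^2}$, so that $W=bQ$. The recurrence then reads $t'=F(t,b)=1/(t+Q)$. $F$ decreases in $b$ trivially, and in $t$ because $\partial_t(t+Q)=1+(t-c+s^2tb^2)/Q>0$, using $Q^2=(t-c)^2+s^2(1+t^2b^2)>(t-c)^2$. On $[\tfrac34,\tfrac45]\times(0,\tfrac14]$ only two corners matter: $F\le F(\tfrac34,0)=\tfrac45$ and $F\ge F(\tfrac45,\tfrac14)=1/(\tfrac45+\sqrt{399}/40)>\tfrac{10}{13}>\tfrac34$. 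Also $b'=tb\le\tfrac45 b\le\tfrac14$. For a product region these constants are dictated. Since $t_0=\tfrac34$, $\alpha(t)=\angle OAB$ decreases in $t$, and $\alpha(\tfrac34)=\arccos(-\tfrac14)=G$, you need $\lambda_1=\tfrac34$. Closure then forces $\lambda_2\ge F(\tfrac34,0^+)=\tfrac45$.

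Uniqueness needs exact signs rather than asymptotics. From the outward normals, $\cos\theta_{PA}=(a-bc)/W<0$ since $t<c$. Meanwhile $\cos\theta_{PB}=(b-ac)/W>0$, $\cos\theta_{OP}=c$, $\theta_{OA}=\theta_{OB}=\tfrac\pi2$ and $\cos\theta_{AB}=abs/W>0$. So $PA$ is the only obtuse dihedral. The angle bounds then follow from $W\ge L=|AB|$ and $Q\ge s$:
\begin{itemize}
\item $\theta_{PA}\le\alpha(t)\le G$;
\item $\cos\theta_{PB}\le(1-ct)/\sqrt{1+t^2-2ct}\le\tfrac{11}{16}<c$;
\item $\cos\theta_{AB}=as/Q\le a\le\tfrac15$;
\item $\cos\angle PAB=\tfrac{a}{\sqrt{1+a^2}}\cos\alpha(t)$, which gives $\angle PAB<\alpha(t)$.
\end{itemize}

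Finally, there is a small slip in your conformity paragraph: $OB_n$ is not always contained in $OB_0$. Since the rays swap at every step, $OB_n\subset OB_0$ for even $n$ and $OB_n\subset OA_0$ for odd $n$. The conclusion is unaffected, because both are boundary edges of $T_0$.
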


Thus one fixed nondegenerate initial element, of volume $\sqrt{15}/1024$, produces a tie-free branch violating the minimum angle condition and diameter decay while satisfying the angular maximum angle condition. This situation is allustrated in Figure~\ref{fig:sequence}.

\section{An invariant family and its exact recurrence}

Set
\begin{equation}\label{eq:cs}
 c=\frac78,\qquad s=\frac{\sqrt{15}}8,
\end{equation}
and consider, up to a rigid motion fixing $O$ and $P$,
\begin{equation}\label{eq:family}
 T(a,b)=\conv\{O,P,A,B\},\qquad
 A=(a,0,0),\quad B=(bc,bs,0),
\end{equation}
where
\begin{equation}\label{eq:range}
 0<b\le\frac14,
 \qquad \frac34\le t:=\frac ab\le\frac45.
\end{equation}
The seed \eqref{eq:seed} corresponds to $a_0=3/16$ and $b_0=1/4$.  Define
\begin{equation}\label{eq:LW}
 L=\sqrt{a^2+b^2-2abc},\qquad
 W=\sqrt{a^2+b^2-2abc+a^2b^2s^2}.
\end{equation}
Here $L=|AB|$ and $W$ is twice the area of the face $PAB$.

\begin{lemma}[Unique selected edge]\label{lem:unique}
For every $T(a,b)$ satisfying \eqref{eq:range}, the edge $PA$ carries the unique largest internal dihedral angle.
\end{lemma}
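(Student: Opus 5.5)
The plan is to compute all six internal dihedral angles of $T(a,b)$ explicitly from outward unit normals, and then compare them directly. Because $O$, $A$, $B$ lie in the plane $z=0$ and $OP$ is perpendicular to that plane, four of the six angles can be read off immediately. The angle at $OP$ equals $\angle AOB=\arccos c=\gamma$, since the faces $OPA$ and $OPB$ are vertical planes through the $z$-axis. The angles at $OA$ and $OB$ are both exactly $\pi/2$, since the face $OAB$ lies in $z=0$ and the faces $OPA$, $OPB$ are vertical.

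For the remaining edges $PA$, $PB$, $AB$ we need the normal of the face $PAB$. It is
\[
 n=(A-P)\times(B-P)=(bs,\;a-bc,\;abs),
\]
and $|n|^2=a^2+b^2-2abc+a^2b^2s^2=W^2$, which matches \eqref{eq:LW}. This $n$ is outward, because $n\cdot(O-P)=-abs<0$. The other outward normals are $(0,0,-1)$ for $OAB$, $(0,-1,0)$ for $OPA$ (as $B$ has positive $y$-coordinate), and $(-s,c,0)$ for $OPB$ (as $A\cdot(s,-c,0)=as>0$). Using $\cos\theta_e=-\nu_1\cdot\nu_2$ for the two outward unit normals of the faces meeting at $e$, I expect
\[
 \cos\theta_{PA}=\frac{a-bc}{W},\qquad \cos\theta_{PB}=\frac{b-ac}{W},\qquad \cos\theta_{AB}=\frac{abs}{W}.
\]
Here $s^2+c^2=1$ is used for $PB$.

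Next, insert the range \eqref{eq:range}, writing $a=tb$ with $\frac34\le t\le\frac45$. Then $a-bc=b(t-\frac78)<0$, because $t\le\frac45<\frac78$, so $\theta_{PA}>\pi/2$ strictly. On the other hand, $b-ac=b(1-\frac78t)>0$ and $abs>0$, so $\theta_{PB}$ and $\theta_{AB}$ are acute, and $\theta_{OP}=\gamma$ is acute as well. Hence $PA$ is the only obtuse dihedral angle. Since every other dihedral angle is at most $\pi/2$, the angle at $PA$ is the unique largest one.

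I do not expect a genuine obstacle here. The only delicate points are the orientation of the normals, which fixes the signs of the cosines, and the strict inequality $t<\frac78$, which is exactly what makes the angle at $PA$ strictly obtuse and rules out a tie with the right angles at $OA$ and $OB$. The hypothesis $b\le\frac14$ is not needed for this lemma; presumably it is used later in the invariant-range argument.
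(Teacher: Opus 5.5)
Your proposal is correct and essentially matches the paper's proof: the same outward normals $(0,0,-1)$, $(0,-1,0)$, $(-s,c,0)$ and $(bs,a-bc,abs)/W$, the rule $\cos\theta=-u\cdot v$, the same three cosines, and the observation that $t\le\frac45<c$ makes $\theta_{PA}$ the only obtuse dihedral while $b-ac=b(1-tc)>0$ keeps the others acute or right. Your explicit orientation checks, and your remark that the hypothesis $b\le\frac14$ is not used here, are accurate additions.
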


\begin{proof}
Outward unit normals to $OAB$, $OPA$, $OPB$, and $PAB$ are, respectively,
\[
 (0,0,-1),\quad (0,-1,0),\quad (-s,c,0),\quad
 \frac{(bs,a-bc,abs)}{W}.
\]
If $u,v$ are adjacent outward normals, the corresponding internal dihedral satisfies $\cos\theta=-u\cdot v$.  Hence
\begin{equation}\label{eq:dihedrals}
\begin{aligned}
 \cos\theta_{PA}&=\frac{a-bc}{W},&
 \cos\theta_{PB}&=\frac{b-ac}{W},&
 \cos\theta_{OP}&=c,\\
 \theta_{OA}&=\theta_{OB}=\frac\pi2,&&
 \cos\theta_{AB}&=\frac{abs}{W}.
\end{aligned}
\end{equation}
Since $a/b\le4/5<7/8=c$, the dihedral at $PA$ is obtuse.  The other five are acute or right because $b-ac=b(1-tc)>0$.  Thus $PA$ is uniquely selected.
\end{proof}

\begin{lemma}[Spatial angle-bisector ratio]\label{lem:bisector}
If the internal dihedral bisector through $XY$ meets the opposite edge $ZW$ at $E$, then
\begin{equation}\label{eq:bisector}
 \frac{|ZE|}{|EW|}=
 \frac{\dist(Z,\operatorname{line}(XY))}
 {\dist(W,\operatorname{line}(XY))}.
\end{equation}
\end{lemma}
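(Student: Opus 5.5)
The plan is to reduce the statement to the planar angle-bisector theorem by projecting orthogonally along the line $XY$. Let $\pi$ be a plane orthogonal to $\operatorname{line}(XY)$, and let $p$ denote orthogonal projection onto $\pi$. Under $p$ the line $XY$ collapses to a single point $Q$. The two faces $XYZ$ and $XYW$ are contained in half-planes bounded by $\operatorname{line}(XY)$, and these project to two rays from $Q$, through $p(Z)$ and $p(W)$ respectively. Because both half-planes contain $\operatorname{line}(XY)$, which is orthogonal to $\pi$, the angle between the two rays equals the internal dihedral angle $\theta_{XY}$. For the same reason, the bisector half-plane projects to the internal bisector ray of the planar angle $\angle p(Z)\,Q\,p(W)$.

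Next I will use the fact that orthogonal projection preserves distances to $\operatorname{line}(XY)$. Concretely, $\dist(Z,\operatorname{line}(XY))=|Q\,p(Z)|$, and likewise for $W$. The point $E$ lies on segment $ZW$ and on the bisector plane, so $p(E)$ lies on the segment $p(Z)p(W)$ and on the bisector ray. The segment $p(Z)p(W)$ is nondegenerate, because $\theta_{XY}\in(0,\pi)$ and $Z,W\notin\operatorname{line}(XY)$. The planar angle-bisector theorem in triangle $Q\,p(Z)\,p(W)$ then gives
\[
 \frac{|p(Z)p(E)|}{|p(E)p(W)|}=\frac{|Q\,p(Z)|}{|Q\,p(W)|}.
\]

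It remains to lift this ratio back to the segment $ZW$. Since $p$ is affine, it preserves ratios of lengths along a line, provided the line $ZW$ is not parallel to the projection direction. That holds here, because $p(Z)\ne p(W)$. Hence $|ZE|/|EW|=|p(Z)p(E)|/|p(E)p(W)|$, which is \eqref{eq:bisector}.

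As an alternative check, one could argue with distances to the bisector plane, or with volumes. The tetrahedra $XYZE$ and $XYEW$ have volumes proportional to $|ZE|$ and $|EW|$. They can also be computed as $\frac13\cdot\text{area}\cdot\text{height}$ using the faces $XYZ$ and $XYW$, with $E$ equidistant from both face planes because it lies on the bisector. Either route confirms the ratio. The only delicate point is verifying that the projected bisector is the planar bisector and that ratios survive projection, and both follow from the orthogonality of $\pi$ to $XY$. I expect no real obstacle.
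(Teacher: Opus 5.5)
Your proposal is correct and follows essentially the same route as the paper: project orthogonally onto a plane perpendicular to $XY$, observe that the faces and the bisector plane become two rays and their internal bisector, apply the planar angle-bisector theorem, and use that the projection is affine and non-collapsing on $ZW$. The paper states this in a few lines; you add the checks it leaves implicit (distances to $\operatorname{line}(XY)$ are preserved and $p(Z)\ne p(W)$), and your volume-based alternative is also valid.
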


\begin{proof}
Project orthogonally onto a plane perpendicular to $XY$.  The edge $XY$ becomes a point, the two incident faces become rays, and the bisector plane becomes their ordinary internal angle bisector.  Formula \eqref{eq:bisector} is the planar angle-bisector theorem, since orthogonal projection is affine on $ZW$.
\end{proof}

Apply Lemma~\ref{lem:bisector} to the selected edge $PA$ of $T(a,b)$.  Its bisector meets $OB$ at a point $E$.  Since
\[
 \dist(O,PA)=\frac{a}{\sqrt{1+a^2}},\qquad
 \dist(B,PA)=\frac{W}{\sqrt{1+a^2}},
\]
we obtain, writing $e=|OE|$,
\begin{equation}\label{eq:e}
 \frac{e}{b-e}=\frac aW,
 \qquad e=\frac{ab}{a+W}.
\end{equation}
Retain the child $\conv\{O,P,A,E\}$ and relabel $A'=E$, $B'=A$.  The two base rays exchange roles, so the child is again of the form \eqref{eq:family}, with
\begin{equation}\label{eq:return}
 \boxed{\quad a'=\frac{ab}{a+W},\qquad b'=a.\quad}
\end{equation}

\begin{lemma}[Invariant range]\label{lem:invariant}
The recurrence \eqref{eq:return} maps the parameter region \eqref{eq:range} into itself.
\end{lemma}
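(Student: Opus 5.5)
The plan is to reduce invariance to two scalar inequalities in the ratio $t=a/b$, treating the extra term $a^2b^2s^2$ in $W$ as a small perturbation controlled by $b\le\frac14$. Take $(a,b)$ satisfying \eqref{eq:range} and let $(a',b')$ be given by \eqref{eq:return}. Two things must be checked: the bound on $b'$, and the range of the new ratio $t'=a'/b'$.

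The bound on $b'$ is immediate. We have $b'=a=tb$, so $0<b'\le\frac45 b\le\frac15<\frac14$. For the ratio,
\[
t'=\frac{a'}{b'}=\frac{b}{a+W}.
\]
Since $a+W>0$, the condition $\frac34\le t'\le\frac45$ is equivalent to $\frac54 b\le a+W\le\frac43 b$. Dividing by $b$ and using $a=tb$, this becomes
\[
\tfrac54-t\ \le\ \frac Wb\ \le\ \tfrac43-t .
\]
Both outer quantities are positive for $t\in[\frac34,\frac45]$, so we may square. From \eqref{eq:LW} and $c=\frac78$, $s^2=\frac{15}{64}$,
\[
\frac{W^2}{b^2}=q(t)+\tfrac{15}{64}\,t^2b^2,\qquad q(t)=t^2-\tfrac74 t+1 .
\]

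For the lower bound, drop the nonnegative term $\tfrac{15}{64}t^2b^2$ and compare $q(t)$ with $(\frac54-t)^2=t^2-\frac52 t+\frac{25}{16}$. The difference is $\frac34 t-\frac9{16}$, which is nonnegative exactly when $t\ge\frac34$. This is where the left endpoint of the range is sharp.

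For the upper bound, compare with $(\frac43-t)^2=t^2-\frac83 t+\frac{16}9$. The required inequality reduces to
\[
\tfrac{11}{12}\,t+\tfrac{15}{64}\,t^2b^2\le\tfrac79 .
\]
Using $t\le\frac45$ and $b\le\frac14$, the left side is at most
\[
\tfrac{11}{15}+\tfrac{15}{64}\cdot\tfrac{16}{25}\cdot\tfrac1{16}=\tfrac{11}{15}+\tfrac{3}{320}<\tfrac79 .
\]
Hence the inequality holds, with room to spare.

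The only step needing care is this upper estimate. It is the one place where the hypothesis $b\le\frac14$ enters, through the $a^2b^2s^2$ term, and it is why the invariant region must bound $b$ and not only the ratio $t$. Once that term is bounded crudely as above, the remaining verification is elementary.
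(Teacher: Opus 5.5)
Your proof is correct, and it reaches the result by a slightly different route from the paper.

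\textbf{The paper's route.} It writes $t'=F(t,b)=1/(t+Q)$ with $Q=W/b$ and proves that $F$ is decreasing in both $b$ and $t$. Monotonicity in $t$ requires differentiating $Q$ and using $Q^2=(t-c)^2+s^2(1+t^2b^2)>(t-c)^2$. It then evaluates $F$ at the two extreme corners of $[3/4,4/5]\times[0,1/4]$. The upper value $F(3/4,0)=4/5$ needs a continuous extension of $F$ to $b=0$. The lower value is computed explicitly as $F(4/5,1/4)>10/13$.

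\textbf{Your route.} You square the two target inequalities $\tfrac54-t\le W/b\le\tfrac43-t$, and the $t^2$ terms cancel. What remains is the identity $q(t)-(\tfrac54-t)^2=\tfrac34(t-\tfrac34)$ for one bound. For the other you get an inequality linear in $t$ plus a perturbation that you bound crudely using $b\le\tfrac14$.

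\textbf{What each buys.} Your argument is purely algebraic: no calculus, no extension to $b=0$. It also shows plainly that $t\ge\tfrac34$ is exactly what forces $t'\le\tfrac45$, with $b$ playing no role there. The hypothesis $b\le\tfrac14$ enters only, and with room to spare, in securing $t'\ge\tfrac34$. The paper's monotonicity argument gives somewhat more information. It locates where the extremes of $t'$ over the region occur, and it yields the sharper image interval $(10/13,4/5]$ rather than just $[3/4,4/5]$.
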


\begin{proof}
With $a=tb$, write
\[
 Q=\sqrt{1+t^2-2ct+s^2t^2b^2}.
\]
Then
\begin{equation}\label{eq:F}
 t'=\frac{a'}{b'}=F(t,b):=\frac1{t+Q}.
\end{equation}
The function $F$ decreases with $b$.  It also decreases with $t$, because
\[
 \frac{\partial}{\partial t}(t+Q)
 =1+\frac{t-c+s^2tb^2}{Q}>0,
 \qquad Q^2=(t-c)^2+s^2(1+t^2b^2)>(t-c)^2.
\]
After continuously extending $F$ to $b=0$, its maximum on
$[3/4,4/5]\times[0,1/4]$ is
\[
 F\left(\frac34,0\right)=\frac45.
\]
At the opposite corner $Q^2=399/1600$, and therefore
\[
 F(t,b)\ge F\left(\frac45,\frac14\right)
 =\frac1{4/5+\sqrt{399}/40}>\frac{10}{13}>\frac34.
\]
Finally, $0<b'=a=tb\le(4/5)b\le1/4$.  Both restrictions in \eqref{eq:range} are preserved.
\end{proof}

Lemmas~\ref{lem:unique} and \ref{lem:invariant} prove that the recurrence can be continued indefinitely and that every selected LAB edge is unique.  This exact return mechanism is analogous in spirit to the contracting parameter recurrences used for degenerating longest-edge-bisection orbits \cite{KorLEB,AKS}, but here it is obtained directly from the dihedral bisector geometry.

\begin{figure}[tbp]
\centering
\begin{tikzpicture}[font=\footnotesize,line join=round,line cap=round]
\path[use as bounding box] (-.15,-1.60) rectangle (15.25,5.28);
\begin{scope}[shift={(0.4200,0)}]
\coordinate (O) at (0.000000000,0.000000000);
\coordinate (P) at (0.000000000,4.499513268);
\coordinate (A) at (1.270822660,-0.251358333);
\coordinate (B) at (1.694430214,0.335144444);
\fill[black!4] (O)--(P)--(A)--cycle;
\fill[black!13] (P)--(A)--(B)--cycle;
\draw[line width=.55pt] (O)--(A)--(B)--(P)--(A);
\draw[densely dashed,line width=.45pt] (O)--(B);
\draw[line width=1.05pt] (O)--(P);
\node at (.8,5.08) {$n=0$};
\node at (.8,-1.00) {$\varphi_{0}=10.620^\circ$};
\node at (.8,-1.38) {$b_{0}=0.25000$};
\fill (A) circle (.9pt);\node[below right,inner sep=2pt] at (A) {$ A_0 $};
\fill (B) circle (.9pt);\node[right,inner sep=2pt] at (B) {$ B_0 $};
\node[above,inner sep=2pt] at (P) {$P$};\node[below left,inner sep=2pt] at (O) {$O$};
\end{scope}
\begin{scope}[shift={(3.4200,0)}]
\coordinate (O) at (0.000000000,0.000000000);
\coordinate (P) at (0.000000000,4.499513268);
\coordinate (A) at (0.782503006,-0.154772697);
\coordinate (B) at (1.010054032,0.199780430);
\fill[black!4] (O)--(P)--(A)--cycle;
\fill[black!13] (P)--(A)--(B)--cycle;
\draw[line width=.55pt] (O)--(A)--(B)--(P)--(A);
\draw[densely dashed,line width=.45pt] (O)--(B);
\draw[line width=1.05pt] (O)--(P);
\node at (.8,5.08) {$n=2$};
\node at (.8,-1.00) {$\varphi_{2}=6.586^\circ$};
\node at (.8,-1.38) {$b_{2}=0.14903$};
\node[above,inner sep=2pt] at (P) {$P$};\node[below left,inner sep=2pt] at (O) {$O$};
\end{scope}
\begin{scope}[shift={(6.4200,0)}]
\coordinate (O) at (0.000000000,0.000000000);
\coordinate (P) at (0.000000000,4.499513268);
\coordinate (A) at (0.376673569,0.074502953);
\coordinate (B) at (0.480433637,-0.095025846);
\fill[black!4] (O)--(P)--(A)--cycle;
\fill[black!13] (P)--(A)--(B)--cycle;
\draw[line width=.55pt] (O)--(A)--(B)--(P)--(A);
\draw[densely dashed,line width=.45pt] (O)--(B);
\draw[line width=1.05pt] (O)--(P);
\node at (.8,5.08) {$n=5$};
\node at (.8,-1.00) {$\varphi_{5}=3.181^\circ$};
\node at (.8,-1.38) {$b_{5}=0.07088$};
\node[above,inner sep=2pt] at (P) {$P$};\node[below left,inner sep=2pt] at (O) {$O$};
\end{scope}
\begin{scope}[shift={(9.4200,0)}]
\coordinate (O) at (0.000000000,0.000000000);
\coordinate (P) at (0.000000000,4.499513268);
\coordinate (A) at (0.111125846,-0.021979784);
\coordinate (B) at (0.141834299,0.028053665);
\fill[black!4] (O)--(P)--(A)--cycle;
\fill[black!13] (P)--(A)--(B)--cycle;
\draw[line width=.55pt] (O)--(A)--(B)--(P)--(A);
\draw[densely dashed,line width=.45pt] (O)--(B);
\draw[line width=1.05pt] (O)--(P);
\node at (.8,5.08) {$n=10$};
\node at (.8,-1.00) {$\varphi_{10}=0.939^\circ$};
\node at (.8,-1.38) {$b_{10}=0.02093$};
\node[above,inner sep=2pt] at (P) {$P$};\node[below left,inner sep=2pt] at (O) {$O$};
\end{scope}
\begin{scope}[shift={(12.4200,0)}]
\coordinate (O) at (0.000000000,0.000000000);
\coordinate (P) at (0.000000000,4.499513268);
\draw[line width=1.05pt] (O)--(P);
\node at (.8,5.08) {$n\to\infty$};
\node[above,inner sep=2pt] at (P) {$P$};
\node[below left,inner sep=2pt] at (O) {$O$};
\node at (.8,-1.00) {$\varphi_n\to0$};
\node at (.8,-1.38) {$[O,P]$};
\end{scope}
\end{tikzpicture}
\caption{The actual descendants $T_0,T_2,T_5,T_{10}$ and their limiting segment. Every panel uses the same orthographic projection and the same spatial scale; there is no per-panel rescaling or rotation. The bold edge $OP$ has length one throughout. Here $b_n=|OB_n|$ and $\varphi_n=\angle OPA_n$. The lateral vertices exchange their labels after each cut, but both approach $O$.}
\label{fig:sequence}
\end{figure}

\section{Degeneration and angle bounds}

\begin{theorem}[Three geometric conclusions]\label{thm:geometry}
The LAB branch generated from \eqref{eq:seed} satisfies \eqref{eq:rates-main}--\eqref{eq:angle-main}.
\end{theorem}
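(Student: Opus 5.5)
The proof works entirely in the parametrization $T_n=T(a_n,b_n)$ supplied by Lemmas~\ref{lem:unique} and \ref{lem:invariant}. By Lemma~\ref{lem:invariant}, every $(a_n,b_n)$ stays in the region \eqref{eq:range}. Since $b_{n+1}=a_n=t_nb_n\le\frac45 b_n$, we get $b_n\le\frac14(\frac45)^n$ and $a_n\le b_n$. All three conclusions will be read off from this geometric decay together with the explicit formulas \eqref{eq:dihedrals}.

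\emph{Rates.} In the right triangle $OPA_n$ we have $\tan\angle OPA_n=a_n$, so
\[
\angle OPA_n\le a_n\le\tfrac45 b_n\le\tfrac15(\tfrac45)^n.
\]
For the aspect ratio, take $h_{T_n}\ge|OP|=1$. Bound the inradius by $r=3V/S$ with $V=\frac16 a_nb_ns$ and $S>|OPA_n|+|OPB_n|=\frac12(a_n+b_n)$. This gives $r<a_nb_ns/(a_n+b_n)\le a_n s\cdot\frac{1}{1+t}\le\dots$. The cleanest route is to use $r\le$ (a suitable multiple of) $\min(a_n,b_n)$ and keep the constants explicit. The target constant $\frac{64}{\sqrt{15}}(\frac54)^n$ equals $1/(4s\,b_n^{\max})$ with $b_n^{\max}=\frac14(\frac45)^n$, so I need $r_{T_n}\le s\,b_n\cdot(\text{const}\le 1/4\cdot4)$. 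I will check that $r<\frac{a_nb_ns}{a_n+b_n}\le s\,b_n\cdot\frac{t}{1+t}<s b_n/4\cdot\ldots$; if the constant is off, I will add the face areas $|OA_nB_n|$ and $|PA_nB_n|=W/2$ to $S$ to sharpen the bound. This constant-matching is the fiddly step.

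\emph{Limits.} All four vertices satisfy $|A_n|,|B_n|\to0$, so $h_{T_n}=\max(1,|PA_n|,|PB_n|,\dots)$. Here $|PB_n|=\sqrt{1+b_n^2}\to1$ and the other edges are shorter, hence $h_{T_n}\to1$. The sets are nested because each $T_{n+1}$ is a LAB child of $T_n$. Every $T_n$ contains $[O,P]$, and $T_n$ lies in the $b_n$-neighbourhood of the segment $[O,P]$, so the intersection equals $[O,P]$.

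\emph{Angles.} Use \eqref{eq:dihedrals}. First, $\theta_{OP}=\arccos c=\gamma$ and $\theta_{OA}=\theta_{OB}=\pi/2$. Next, $\cos\theta_{PA}=b(t-c)/W$. Since $W\ge L$ and $L^2=b^2(1+t^2-2ct)$, we get $|\cos\theta_{PA}|\le|t-c|/\sqrt{(t-c)^2+s^2}$. On $t\in[3/4,4/5]$ this is maximized at $t=3/4$, giving $(1/8)/\sqrt{1/64+15/64}=1/4$. Hence $\theta_{PA}\le\arccos(-\frac14)=G$. For $\theta_{PB}$ and $\theta_{AB}$ I show the cosines lie in $[0,c]$. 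For $\theta_{PB}$, $(1-tc)/\sqrt{1+t^2-2ct}$ is at most $7/8$ on the range, by a direct endpoint and monotonicity check. For $\theta_{AB}$ the cosine is $abs/W\le ab s/L$, which is tiny since $b\le1/4$. Face angles: those at $O$ are $\pi/2$ or the angle $\arccos c$. Those at $P$ are small. In face $OAB$ the angle at $A$ is the largest one. It is obtuse, with $\cos=(a-bc)/L$, and the same computation without the $s^2a^2b^2$ term gives $\cos\ge-\frac14$. Face $PAB$ is handled similarly, its angles being controlled by those of $OAB$ up to the small lift. The main obstacle is organizing these finitely many one-variable inequalities over $t\in[3/4,4/5]$, $b\le1/4$ with exact constants. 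I would reduce each to monotonicity in $t$ plus the crude bound $b\le1/4$.
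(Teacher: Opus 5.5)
Your plan is correct in substance. For the angle bounds it is essentially the paper's argument: the same dihedral formulas, the reduction $W\ge L$ to one-variable functions of $t=a/b$ on $[\frac34,\frac45]$, and the extremal value at $t=\frac34$ giving $\cos\ge-\frac14$.

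The genuinely different step is the inradius bound. The paper notes that $T_n$ lies in the slab of width $sb_n$ between the plane $OPA_n$ and its parallel through $B_n$. Hence $r_{T_n}\le\frac12 sb_n$, and $h_{T_n}/r_{T_n}\ge\frac{2}{sb_n}\ge\frac{64}{\sqrt{15}}(\frac54)^n$ exactly. Your $r=3V/S$ with $V=\frac16a_nb_ns$ and $S>\frac12(a_n+b_n)$ also works and is slightly sharper. However, your constant bookkeeping is wrong: $\frac{64}{\sqrt{15}}(\frac54)^n=\frac{2}{s\,b_n^{\max}}$, not $\frac{1}{4s\,b_n^{\max}}$. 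So, using $h_{T_n}\ge1$ and $b_n\le b_n^{\max}$, what you need is $r_{T_n}\le\frac12 sb_n$. Your own inequality $r_{T_n}<\frac{t}{1+t}\,sb_n\le\frac49\,sb_n$ already gives this, hence $h_{T_n}/r_{T_n}>\frac{9}{4sb_n}\ge\frac{72}{\sqrt{15}}(\frac54)^n$. No extra face areas are needed. The slab argument buys the stated constant with no volume or area computation; yours is one line of algebra with a better constant.

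The one step that is not actually argued is the face $PAB$, and the heuristic you give for it is wrong. $PAB$ is not a small perturbation of $OAB$: the apex is at height $1$ while the base has diameter $b_n\le\frac14$. Already at the seed, the angle at $B$ is about $46.6^\circ$ in $OAB$ but about $80.4^\circ$ in $PAB$. What works is an exact identity. From $\overrightarrow{AP}\cdot\overrightarrow{AB}=a(a-bc)$ you get $\cos\angle PAB=\frac{a}{\sqrt{1+a^2}}\cos\angle OAB$. Since $\cos\angle OAB<0$ and the prefactor lies in $(0,1)$, this gives $\frac\pi2<\angle PAB<\angle OAB\le G$. The angles at $B$ and $P$ are acute because $\overrightarrow{BP}\cdot\overrightarrow{BA}=b(b-ac)>0$ and $\overrightarrow{PA}\cdot\overrightarrow{PB}=1+abc>0$.

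The remaining checks you deferred close as expected:
\begin{itemize}
\item $\frac{d}{dt}\frac{1-ct}{\sqrt{1+t^2-2ct}}=-\frac{s^2t}{(1+t^2-2ct)^{3/2}}<0$, so $\cos\theta_{PB}\le\frac{11}{16}<c$.
\item $\cos\theta_{AB}\le\frac{abs}{L}=\frac{as}{\sqrt{(t-c)^2+s^2}}\le a\le\frac15$.
\item $\theta_{PA}\ge\gamma$ simply because it is obtuse.
\end{itemize}
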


\begin{proof}
Let $(a_n,b_n)$ be its parameters and $t_n=a_n/b_n$.  From $b_{n+1}=a_n=t_nb_n$ and \eqref{eq:range},
\begin{equation}\label{eq:ab-bounds}
 \frac14\left(\frac34\right)^n\le b_n
 \le\frac14\left(\frac45\right)^n,
 \qquad
 a_n=t_nb_n\le\frac15\left(\frac45\right)^n.
\end{equation}
The face $OPA_n$ is right-angled at $O$, with $OP=1$, so
\[
 \angle OPA_n=\arctan a_n
 \le\frac15\left(\frac45\right)^n\longrightarrow0.
\]

The tetrahedron lies in a slab of width $b_ns$ bounded by the plane $OPA_n$ and its parallel translate through $B_n$.  Any inscribed ball therefore has diameter at most $b_ns$.  Since $OP\subset T_n$,
\[
 r_{T_n}\le\frac{b_ns}{2},\qquad h_{T_n}\ge1,
\]
and the upper estimate for $b_n$ gives
\[
 \frac{h_{T_n}}{r_{T_n}}
 \ge\frac2{sb_n}
 \ge\frac{64}{\sqrt{15}}\left(\frac54\right)^n.
\]
Because $b_n\ge a_n$ and all base edges are shorter than one, the longest edge is $PB_n$.  Hence
\[
 h_{T_n}=\sqrt{1+b_n^2}\longrightarrow1.
\]
Both base vertices approach $O$, whereas $P$ stays fixed.  The tetrahedra are nested and contain $OP$, so their intersection is exactly $[O,P]$.

It remains to control the angles from above and the dihedrals from below.  Let $\alpha(t)=\angle OAB$.  The cosine rule gives
\begin{equation}\label{eq:alpha}
 \cos\alpha(t)=\frac{t-c}{\sqrt{1+t^2-2ct}}.
\end{equation}
The derivative of the right-hand side is
$s^2(1+t^2-2ct)^{-3/2}>0$.  Therefore $\alpha$ decreases on the invariant interval and
\begin{equation}\label{eq:alpha-bound}
 \frac\pi2<\alpha(t)\le\alpha\left(\frac34\right)
 =\arccos\left(-\frac14\right)=G.
\end{equation}
In \eqref{eq:dihedrals}, $a-bc<0$ and $W\ge L$, so
$\theta_{PA}\le\alpha(t)\le G$.  All other dihedrals are at most $\pi/2$.

For the lower bound, $\theta_{OP}=\gamma$ and
$\theta_{OA}=\theta_{OB}=\pi/2$.  Moreover,
\[
 \cos\theta_{PB}
 \le\frac{1-ct}{\sqrt{1+t^2-2ct}}
 \le\frac{11}{16}<c,
\]
where the middle quotient decreases with $t$.  From \eqref{eq:F}, $Q\ge s$, and hence
\[
 \cos\theta_{AB}=\frac{as}{Q}\le a\le\frac15<c.
\]
The remaining angle $\theta_{PA}$ is obtuse.  Thus every dihedral is at least $\gamma$.

Finally, the base $OAB$ has largest angle $\alpha(t)\le G$, while $OPA$ and $OPB$ are right triangles.  In $PAB$, the angles at $B$ and $P$ are acute because the relevant scalar products are $b(b-ac)>0$ and $1+abc>0$.  At $A$,
\[
 \cos\angle PAB=
 \frac{a}{\sqrt{1+a^2}}\cos\alpha(t).
\]
Since $\cos\alpha(t)<0$ and the prefactor lies in $(0,1)$,
$\angle PAB<\alpha(t)\le G$.  This proves \eqref{eq:angle-main}.
\end{proof}

\begin{remark}
The contrast with the explicit longest-edge example of \cite{KorLEB} is useful.  There degeneration is accompanied by dihedrals tending to both $0$ and $\pi$, so both angle conditions fail.  Here no dihedral degenerates and the angular maximum-angle condition holds; the loss of regularity occurs through a face angle tending to zero.  The example therefore separates dihedral-angle control from the full tetrahedral minimum-angle condition.
\end{remark}

\begin{figure}[tbp]
\centering
\begin{tikzpicture}[font=\small,line join=round,line cap=round]
\path[use as bounding box] (-.4,-.7) rectangle (10.25,5.4);
\coordinate (O) at (0.000000000,0.000000000);
\coordinate (B0) at (8.750000000,4.841229183);
\coordinate (A0) at (7.500000000,0.000000000);
\coordinate (A1) at (5.215896595,2.885868664);
\coordinate (A2) at (4.618089311,0.000000000);
\coordinate (A3) at (3.176091697,1.757278607);
\coordinate (A4) at (2.835369867,0.000000000);
\coordinate (A5) at (1.945133946,1.076210197);
\fill[black!18] (O)--(A5)--(A4)--cycle;
\draw[line width=1pt] (O)--(A0)--(B0)--cycle;
\draw[line width=.65pt] (A0)--(A1);
\node[font=\footnotesize,inner sep=1pt] at (7.15529886,2.57569928) {$S_0$};
\draw[line width=.65pt] (A1)--(A2);
\node[font=\footnotesize,inner sep=1pt] at (5.77799530,0.96195622) {$S_1$};
\draw[line width=.65pt] (A2)--(A3);
\node[font=\footnotesize,inner sep=1pt] at (4.33669253,1.54771576) {$S_2$};
\draw[line width=.65pt] (A3)--(A4);
\node[font=\footnotesize,inner sep=1pt] at (3.54318363,0.58575954) {$S_3$};
\draw[line width=.65pt] (A4)--(A5);
\node[font=\footnotesize,inner sep=1pt] at (2.65219850,0.94449627) {$S_4$};
\node[font=\footnotesize] at (1.5,.36) {$T_5$};
\fill (O) circle (.9pt);\node[below left,inner sep=2pt] at (O) {$ O $};
\fill (B0) circle (.9pt);\node[above right,inner sep=2pt] at (B0) {$ B_0 $};
\fill (A0) circle (.9pt);\node[below,inner sep=2pt] at (A0) {$ A_0 $};
\fill (A1) circle (.9pt);\node[above left,inner sep=2pt] at (A1) {$ A_1 $};
\fill (A2) circle (.9pt);\node[below,inner sep=2pt] at (A2) {$ A_2 $};
\fill (A3) circle (.9pt);\node[above left,inner sep=2pt] at (A3) {$ A_3 $};
\fill (A4) circle (.9pt);\node[below,inner sep=2pt] at (A4) {$ A_4 $};
\fill (A5) circle (.9pt);\node[above left,inner sep=2pt] at (A5) {$ A_5 $};
\end{tikzpicture}
\caption{A planar view of the conforming adaptive construction after five cuts. Coning each triangle to the fixed apex $P$ gives a tetrahedron. The shaded triangle is the base of $T_5$; the five remaining triangles are the bases of the unrefined siblings $S_0,\ldots,S_4$. Every new point lies on one of the two original boundary edges $OA_0$ and $OB_0$, so no hanging node is created on an interior face. Note that $B_5=A_4$.}
\label{fig:partition}
\end{figure}

\section{Conforming adaptive partitions}

\begin{theorem}[Conforming realization]\label{thm:conforming}
For each $n\ge0$, there is a conforming tetrahedral partition $\T_n$ of $T_0$, obtained by $n$ LAB cuts, such that $T_n\in\T_n$ and $\#\T_n=n+1$.
\end{theorem}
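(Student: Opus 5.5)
We argue by induction on $n$ and take the partition $\T_n$ to be the one described in Figure~\ref{fig:partition}: at each step only the current branch element is refined, and every unrefined sibling is kept. Concretely, $\T_0=\{T_0\}$, and $\T_{n+1}$ is obtained from $\T_n$ by replacing $T_n$ with its two LAB children $T_{n+1}=\conv\{O,P,A_n,E_n\}$ and $S_n=\conv\{P,A_n,B_n,E_n\}$. Here $E_n=A_{n+1}$ is the point of Lemma~\ref{lem:bisector} on $OB_n$. The cut is a genuine LAB cut because, by Lemmas~\ref{lem:unique} and \ref{lem:invariant}, $PA_n$ carries the unique largest dihedral of $T_n$. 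Since one element is replaced by two, $\#\T_n=n+1$ after $n$ cuts, and $T_n\in\T_n$ by construction.

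Every element of $\T_n$ is a cone with apex $P$ over a triangle in the base plane $z=0$. The reason is that $O,A_n,B_n$ all lie in that plane: the recurrence \eqref{eq:return} keeps the base rays $OA_0$ and $OB_0$, which are alternately swapped. So the partition $\T_n$ is the cone from $P$ over a planar triangulation $\mathcal B_n$ of the triangle $OA_0B_0$. Conformity of $\T_n$ then follows from conformity of $\mathcal B_n$. A face of a cone element is either its base triangle, which lies on the boundary of $T_0$, or the cone over a base edge. Two cone elements therefore meet in the cone over the common part of their bases. Hence it suffices to show that $\mathcal B_n$ is a conforming triangulation, i.e.\ that no vertex of $\mathcal B_n$ lies in the relative interior of an edge of another triangle.

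We prove this planar claim by induction, carrying the invariant that every vertex of $\mathcal B_n$ lies on $[O,A_0]\cup[O,B_0]$. The step from $\mathcal B_n$ to $\mathcal B_{n+1}$ subdivides the base triangle $OA_nB_n$ of $T_n$ by the segment $A_nE_n$, with $E_n\in(O,B_n)$. This creates a hanging node only if the segment $OB_n$ is also an edge of some other triangle of $\mathcal B_n$. But $OB_n$ lies on a boundary ray of $T_0$, namely $OA_0$ or $OB_0$ depending on the parity of $n$, so no other triangle has it as an edge. We should also check that the segment $A_nE_n$ is not an edge of any existing triangle, which holds because it is new and interior. The invariant is preserved because $E_n$ again lies on $OA_0$ or $OB_0$.

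The main point needing care is the bookkeeping that the edge cut at step $n$ is always a boundary edge of $T_0$ and is not shared with an earlier sibling. Here the sibling $S_{n-1}$ has base $A_{n-1}B_{n-1}E_{n-1}$ and shares with $T_n$ only the segment $A_{n-1}E_{n-1}=B_nA_n$, not $OB_n$. This edge is the cone over $PA_nB_n$, which is not cut; only $OB_n$ is cut. So the subdivided edge is always $OB_n\subset\partial(OA_0B_0)$, and conformity follows.
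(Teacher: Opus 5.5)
Your proposal is correct and follows the paper's proof. In both, only $T_n$ is refined and the siblings $S_0,\dots,S_{n-1}$ are kept. Conformity is reduced to the planar base triangulation of $OA_0B_0$ by coning from $P$, and the subdivided edge $OB_n$ always lies on $OA_0$ or $OB_0$, so no hanging node arises; Lemma~\ref{lem:unique} makes each cut tie-free. Your extra bookkeeping (the vertex invariant, the cone-intersection argument, and the check that the new segment $A_nE_n$ avoids existing vertices) only spells out what the paper states briefly.
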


\begin{proof}
At step $n$, refine only $T_n$ and write its two children as
\begin{equation}\label{eq:children}
 S_n=\conv\{P,A_n,B_n,A_{n+1}\},\qquad
 T_{n+1}=\conv\{O,P,A_{n+1},B_{n+1}\},
\end{equation}
where $B_{n+1}=A_n$.  Set
\[
 \T_n=\{S_0,\ldots,S_{n-1},T_n\},\qquad \T_0=\{T_0\}.
\]
In the base plane $z=0$, the active triangle is $OA_nB_n$, and the cut joins $A_n$ to a point $A_{n+1}$ on $OB_n$.  The segment $OB_n$ lies on one of the original boundary edges $OA_0$ and $OB_0$.  Thus every step subdivides only a boundary edge of the current planar triangulation and leaves the previous interior edge $A_nB_n$ intact.  The base triangulation is conforming.

Coning every base triangle to the common apex $P$ gives a face-to-face tetrahedral partition.  Each cut replaces one element by two, so $\#\T_n=n+1$.  Lemma~\ref{lem:unique} shows that every cut is a genuine, tie-free LAB cut. See Figure~\ref{fig:partition} for an illustration.
\end{proof}

\begin{corollary}[Complete local tree]\label{cor:tree}
Let $\mathcal D_n(T_0)$ be the depth-$n$ local LAB descendants of the seed \eqref{eq:seed}.  Independently of tie-breaking outside the constructed branch,
\[
 \min_{T\in\mathcal D_n(T_0)}\min\{\text{face angles of }T\}\longrightarrow0,
 \qquad
 \max_{T\in\mathcal D_n(T_0)}h_T\ge h_{T_n}\ge1.
\]
Thus the complete local process guarantees neither mesh regularity nor decay of its maximum element diameter.
\end{corollary}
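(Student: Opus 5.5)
The corollary follows from the branch constructed in Section~2 together with Theorems~\ref{thm:geometry} and \ref{thm:conforming}. The plan is to show that each $T_n$ is a genuine member of $\mathcal D_n(T_0)$, regardless of how ties are broken anywhere else.

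\textbf{Step 1: membership of the branch.} We argue by induction on $n$. For $n=0$ there is nothing to prove. Suppose $T_n\in\mathcal D_n(T_0)$ with parameters $(a_n,b_n)$ in the range \eqref{eq:range}; Lemma~\ref{lem:invariant} guarantees this for every $n$. By Lemma~\ref{lem:unique}, the edge $PA_n$ carries the \emph{unique} largest dihedral of $T_n$, so LAB has no choice to make on $T_n$. Both of its children, $S_n$ and $T_{n+1}$, are therefore depth-$(n+1)$ descendants, and $T_{n+1}\in\mathcal D_{n+1}(T_0)$. Tie-breaking elsewhere in the tree affects only other elements; it cannot alter the refinement of $T_n$, which depends only on $T_n$ itself. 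This is the one point that needs care: local LAB descendants are generated element by element, so a branch along which every selection is unique is present in every realization of the tree.

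\textbf{Step 2: the two limits.} Since $T_n\in\mathcal D_n(T_0)$,
\[
\min_{T\in\mathcal D_n(T_0)}\min\{\text{face angles of }T\}\le\angle OPA_n\le\frac15\left(\frac45\right)^n,
\]
where the last inequality is \eqref{eq:rates-main}. The left-hand side is nonnegative, so it tends to $0$. Similarly,
\[
\max_{T\in\mathcal D_n(T_0)}h_T\ge h_{T_n}=\sqrt{1+b_n^2}\ge1,
\]
using the formula for $h_{T_n}$ obtained in the proof of Theorem~\ref{thm:geometry}; alternatively, $h_{T_n}\ge|OP|=1$ simply because $OP\subset T_n$.

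\textbf{Step 3: interpretation.} The minimum face angle over the descendants tends to $0$, so the minimum angle condition fails; by the equivalence cited from \cite{BKK,KKK}, the ratios $h_T/r_T$ are unbounded. At the same time, the maximum diameter at depth $n$ stays at least $1$, so it does not decay. This gives the final sentence of the corollary. No step presents a genuine obstacle; the only subtlety is the tie-independence argument in Step~1.
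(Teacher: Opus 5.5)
Your proposal is correct and follows the paper's own proof: it observes that $T_n$ occurs at depth $n$ and then applies Theorem~\ref{thm:geometry} to $T_n$. Your induction via Lemmas~\ref{lem:unique} and~\ref{lem:invariant} simply spells out the tie-independence that the paper leaves implicit, and your citation of Theorem~\ref{thm:conforming} is not actually needed.
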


\begin{proof}
The branch member $T_n$ occurs at depth $n$, and Theorem~\ref{thm:geometry} applies to it.
\end{proof}

The level-wise local tree in Corollary~\ref{cor:tree} is not asserted to be conforming.  Conformity is instead supplied by Theorem~\ref{thm:conforming}; global closure and alternative marking rules are separate algorithms.

\section{Conclusion}

Largest-angle refinement in the plane controls the complete angle geometry and, for largest-angle $n$-section, also yields diameter decay \cite{IKKL,MKLA}.  Neither conclusion transfers to the tetrahedral LAB rule.  The fixed seed \eqref{eq:seed} has a uniquely selected infinite branch whose face angles degenerate and whose diameter tends to one, even though all dihedrals remain in
\[
 \left[\arccos\frac78,\ \arccos\left(-\frac14\right)\right].
\]
Every face and dihedral angle is also bounded above by $G<\pi$.

Together with the longest-edge counterexamples \cite{KorLEB,AKS}, this shows that natural planar selection principles do not pass directly to tetrahedra.  Longest-edge bisection can retain diameter convergence while losing shape regularity, whereas the LAB branch constructed here loses both.  The conclusion is deliberately specific: it does not concern largest-face-angle rules, midpoint rules, combined safeguards, or the maximum-angle behaviour of every branch from every initial tetrahedron.


\begin{thebibliography}{99}
\small

\bibitem{AKS}
K.~A.~Adiprasito, D.~Kalmanovich, and Y.~Solomon,
\emph{Degenerating orbits of the longest edge bisection process},
arXiv:2609.08846 (2026).

\bibitem{BKK}
J.~Brandts, S.~Korotov, and M.~K\v{r}\'i\v{z}ek,
\emph{On the equivalence of regularity criteria for triangular and tetrahedral finite element partitions},
Comput. Math. Appl. \textbf{55} (2008), 2227--2233.

\bibitem{IKKL}
D.~Ismailescu, J.~Kim, K.~Kim, and J.~Lee,
\emph{The largest angle bisection procedure},
arXiv:1908.02749v2 (2019).

%\bibitem{IKST}
%H.~Ishizaka, K.~Kobayashi, %R.~Suzuki, and T.~Tsuchiya,
%\emph{A new geometric condition %equivalent to the maximum angle %condition for tetrahedrons},
%Comput. Math. Appl. \textbf{99} %(2021), 323--328.

\bibitem{KS}
D.~Kalmanovich and Y.~Solomon,
\emph{On the stability, complexity, and distribution of similarity classes of the longest edge bisection process for triangles},
arXiv:2601.13663  (2026).

\bibitem{KorLEB}
S.~Korotov,
\emph{The longest-edge bisection algorithm may produce degenerating tetrahedra},
arXiv:2608.23139 (2026).


\bibitem{KKK}
S.~Korotov, M.~K\v{r}\'i\v{z}ek, and V.~Ku\v{c}era,
\emph{On degenerating finite element tetrahedral partitions},
Numer. Math. \textbf{152} (2022), 307--329.



\bibitem{KPS}
S.~Korotov, \'A. Plaza, J. P. Suarez,
\emph{Longest-edge $n$-section algorithms: properties and open problems},
J. Comput. Appl. Math. \textbf{293} (2016), 139--146.




\bibitem{Krizek}
M.~K\v{r}\'i\v{z}ek,
\emph{On the maximum angle condition for linear tetrahedral elements},
SIAM J. Numer. Anal. \textbf{29} (1992), 513--520.

\bibitem{MKLA}
J.~Michaud and S.~Korotov,
\emph{On triangulations generated by the largest-angle $n$-section algorithm},
arXiv:2607.25457 (2026).

\end{thebibliography}
\end{document}